\def\norm#1{\|#1\|}
\newcommand{\e}{\varepsilon}
\newcommand{\De}{\Delta}
\newcommand{\p}{\partial}
\newcommand{\na}{\nabla}
\newcommand{\Del}[1]{}
\newcommand{\pt}{&}
\newcommand{\pr}{\\ &}
\newcommand{\prq}{\\ &\quad}
\numberwithin{equation}{section}
\newtheorem{thm}{Theorem}[section]
\newtheorem{lem}[thm]{Lemma}
\theoremstyle{remark}
\newtheorem{rem}{Remark}
\theoremstyle{remark}
\theoremstyle{definition}
\begin{document}
\subjclass[2010]{35Q30, 76B03}
\keywords{Navier-Stokes equations, inviscid limit, Besov space}

\title[Navier-Stokes equations]{
Local well-posedness of the incompressible Euler equations in $B^1_{\infty,1}$ and the inviscid limit of the Navier-Stokes equations}
\author[Z. Guo]{Zihua Guo}
\address{School of Mathematical Sciences, Monash University, Clayton VIC 3800, Australia}
\email{zihua.guo@monash.edu}

\author[J. Li]{Jinlu Li}
\address{Department of Mathematics, Sun Yat-sen University, Guangzhou, 510275, China}
\email{lijl29@mail2.sysu.edu.cn}

\author[Z. Yin]{Zhaoyang Yin}
\address{Department of Mathematics, Sun Yat-sen University, Guangzhou, 510275, China \& Faculty of Information Technology,
 Macau University of Science and Technology, Macau, China}
\email{mcsyzy@mail.sysu.edu.cn}
\begin{abstract}
We prove the inviscid limit of the incompressible Navier-Stokes equations in the same topology of Besov spaces as the initial data.  The proof is based on proving the continuous dependence of the Navier-Stokes equations uniformly with respect to the viscosity. To show the latter, we rely on some Bona-Smith type method in the $L^p$ setting. Our obtained result implies a new result that the Cauchy problem of the Euler equations is locally well-posed in the borderline Besov space $B^{\frac dp+1}_{p,1}(\mathbb{R}^d)$, $1\leq p\leq \infty$, $d\geq 2$, in the sense of Hadmard, which is an open problem left in recent works by Bourgain and Li in \cite{BL,BL1} and by Misio{\l}ek and Yoneda in \cite{MY,MY2, MY3}.
\end{abstract}

\maketitle


\section{Introduction}

In this article, we consider the incompressible Navier-Stokes equations
\begin{align}\label{eq:NS}
\begin{cases}
\p_t u+u\cdot \nabla u-\e \Delta u=-\nabla P,\\
\mathrm{div\,} u=0,\\
u(0,x)=u_0,
\end{cases}
\end{align}
where $u(t,x):[0,\infty)\times {\mathbb R}^d\to {\mathbb R}^d$ is the unknown velocity, $\e\geq 0$ is the viscocity parameter, and $P$ is the pressure term. When the viscocity vanishes, namely $\e=0$, then \eqref{eq:NS} reduces to the Euler equations for ideal incompressible fluid.
Both Navier-Stokes and Euler equations have been extensively studied and the
problems of global regularity for 3D equations are still challenging open problems. See \cite{BCD} for a survey of studies for both equations.

Formally, as $\e\to 0$, the solution of the Navier-Stokes equations converges to the solution of the Euler equation. To derive the convergence rigorously is the inviscid limit problem. This problem has been studied in many literatures. See for example \cite{Swann, Kato}, and \cite{CKV} for the inviscid limit on the bounded domain. In \cite{Majda}, Majda showed under the assumption $u_0\in H^s$ with $s>\frac{d}{2}+2$, the solutions $u_\e$ to \eqref{eq:NS} converge in $L^2$ norm as $\e\to 0$ to the unique solution of Euler equation and the convergence rate is of order $\e t$. In \cite{M}, Masmoudi proved the convergence in $H^s$ norm under the assumption $u_0\in H^s$ with $s>\frac{d}{2}+1$. In dimension two the results are global in time and were improved in \cite{HK} where the assumption is improved to $u_0\in B^2_{2,1}$ with convergence in $L^2$.  The two dimensional results were further generalized to other Besov spaces $B^{2/p+1}_{p,1}$ with convergence in $L^p$, see section 3.4 in \cite{MWZ}. In three dimension a similar result was proved in \cite{Wu} for axis-symmetric flows without swirl.  By interpolation with the uniform estimates, one can get the convergence in all intermediate spaces.  However the convergence in the same topology as the initial data (e.g. $B^{2/p+1}_{p,1}$ in 2D) was unknown and mentioned as an open problem in Remark 4.3 in \cite{MWZ}.

The purpose of this article is to study the inviscid limit in the same topology.  As a by-product, we obtain the continuous dependence for the Euler equations which was not proved in \cite{BCD} or other literatures that we are aware of.  The main result of this paper is

\begin{thm}\label{th1.1}
Let $d\geq 2$. Assume that $\e\in [0,1]$ and $(s,p,r)$ satifies
\begin{align}\label{eq:spr}
s>\frac{d}{p}+1, p\in [1,\infty], r\in (1,\infty) \quad \mbox{   or   } \quad \pt
s=\frac{d}{p}+1, p\in [1,\infty], r=1.
\end{align}
Then for any $R>0$, $u_0\in B_R=\{\phi\in B_{p,r}^s: \norm{\phi}_{B^{s}_{p,r}}\leq R,\ \mathrm{div\,}\ \phi=0\}$, there exists $T=T(R,s,p,r,d)>0$ such that the Navier-Stokes equation has a unique solution $u_\e=S_{T}^\e(u_0)\in C([0,T];B^s_{p,r})$. Moreover, we have

1) (Uniform bounds): there exists $C=C(R,s,p,r,d)>0$ such that
\begin{align}\label{UB1}
\norm{u_\e(t)}_{L_T^\infty B^s_{p,r}}\leq C, \quad \forall\ \e\in [0,1].
\end{align}
Moreover, if $u_0\in B^{\gamma}_{p,r}$ for some $\gamma>s$, then $\exists\ C_2=C_2(R, \gamma,s,p,r,d)>0$
\begin{align}\label{UB2}
\norm{u_\e(t)}_{L_T^\infty B^\gamma_{p,r}}\leq  C_2\norm{u_0}_{B^\gamma_{p,r}}.
\end{align}

2) (Uniform continuous dependence): the solution map $u_0\to S_T^\e(u_0)$ is continuous from $B_R$ to $C([0,T];B^s_{p,r})$ uniformly with respect to $\e$. Namely, $\forall\ \eta>0$, $\exists \delta=\delta(u_0,R,s,p,r,d)>0$ such that for any $\psi\in B_R$ with $\norm{\psi-u_0}_{B^s_{p,r}}<\delta$ then
\begin{align}\label{UB3}
\norm{S_T^\e(u_0)-S_T^\e(\psi)}_{L_T^\infty B^s_{p,r}}<\eta, \quad \forall\ \e\in [0,1].
\end{align}

3) (Inviscid limit): we have
\begin{align}\label{UB4}
\lim_{\e\to 0}\norm{S_T^\e(u_0)-S_T^0(u_0)}_{L_T^\infty B^s_{p,r}}=0.
\end{align}
\end{thm}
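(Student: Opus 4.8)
The plan is to prove the three assertions in the stated order, deducing the inviscid limit \eqref{UB4} from the uniform bounds \eqref{UB1}--\eqref{UB2} and the uniform continuous dependence \eqref{UB3}. Writing the system in Leray-projected form $\partial_t u-\e\Delta u+\mathbb{P}(u\cdot\nabla u)=0$, where $\mathbb{P}$ is the Helmholtz projection, I would first establish local existence together with \eqref{UB1}. After a Littlewood-Paley localization, each dyadic block $\Delta_j u$ solves a forced heat equation; testing against $|\Delta_j u|^{p-2}\Delta_j u$ (equivalently, using that the heat semigroup $e^{\e t\Delta}$ is a contraction on $L^p$), the dissipative term $-\e\Delta$ enters with a favorable sign and hence can only improve the energy balance, never appearing with a bad constant. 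Combined with the standard Bony-decomposition commutator estimate for the transport term, this yields
\begin{align}\label{aux1}
\norm{u_\e(t)}_{B^s_{p,r}}\le \norm{u_0}_{B^s_{p,r}}\exp\Big(C\int_0^t\norm{\nabla u_\e(\tau)}_{L^\infty}\,d\tau\Big),
\end{align}
with $C$ independent of $\e\in[0,1]$. A Friedrichs mollification (or iteration) scheme produces approximate solutions obeying \eqref{aux1} uniformly, and since $\norm{\nabla u_\e}_{L^\infty}\lesssim\norm{u_\e}_{B^s_{p,r}}$ under \eqref{eq:spr}, a continuity argument gives a common existence time $T=T(R)$ and the bound \eqref{UB1}. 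Running the same estimate at regularity $\gamma$ and inserting the now-controlled factor $\int_0^t\norm{\nabla u_\e}_{L^\infty}\,d\tau\le CT$ produces \eqref{UB2}.

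For the continuous dependence \eqref{UB3} I would run a Bona-Smith argument adapted to the $L^p$ framework. The obstruction is that the naive difference estimate for $w=u_\e-\tilde u_\e$ loses one derivative through the term $w\cdot\nabla\tilde u_\e$, so it closes only in $B^{s-1}_{p,r}$. Fixing a low-frequency cut-off $S_n$, I decompose
\begin{align}\label{aux2}
S_T^\e(u_0)-S_T^\e(\psi)&=\big(S_T^\e(u_0)-S_T^\e(S_nu_0)\big)+\big(S_T^\e(S_nu_0)-S_T^\e(S_n\psi)\big)\\
&\quad+\big(S_T^\e(S_n\psi)-S_T^\e(\psi)\big).
\end{align}
The middle term is estimated in $B^{s-1}_{p,r}$ and lifted to $B^s_{p,r}$ by interpolating against the uniform $B^{\gamma}$ bound \eqref{UB2} of the mollified data; the extra regularity supplied by $S_n$ compensates exactly the lost derivative. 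The two outer terms are handled by a single lemma asserting that $\norm{S_T^\e(\phi)-S_T^\e(S_n\phi)}_{B^s_{p,r}}\to0$ as $n\to\I$, uniformly in $\e$ and for $\phi$ in a $B^s$-neighbourhood of $u_0$; its proof again combines the $B^{s-1}$ difference estimate with the decay of the high-frequency tail $\norm{(1-S_n)\phi}_{B^s_{p,r}}$. Choosing $n$ large and then $\delta$ small yields \eqref{UB3}, with $\delta$ depending on $u_0$ through this tail, exactly as the statement allows.

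Finally, \eqref{UB4} follows by a density-and-interpolation scheme. For smooth data $u_0\in B^\gamma_{p,r}$ the difference $v=S_T^\e(u_0)-S_T^0(u_0)$ solves the Euler-transported equation with the single extra source $\e\Delta S_T^0(u_0)$, which is $O(\e)$ in $B^{\gamma-2}_{p,r}$ by \eqref{UB2}; a difference estimate at that lower regularity gives $\norm{v}_{B^{\gamma-2}_{p,r}}\lesssim \e\,T e^{CT}$, and interpolating against the uniform $B^\gamma_{p,r}$ bound forces $\norm{v}_{B^s_{p,r}}\to0$. For general $u_0\in B_R$ I approximate by $S_nu_0\in B^\gamma_{p,r}$ and split
\begin{align}\label{aux3}
S_T^\e(u_0)-S_T^0(u_0)&=\big(S_T^\e(u_0)-S_T^\e(S_nu_0)\big)+\big(S_T^\e(S_nu_0)-S_T^0(S_nu_0)\big)\\
&\quad+\big(S_T^0(S_nu_0)-S_T^0(u_0)\big),
\end{align}
bounding the two outer terms by \eqref{UB3} uniformly in $\e$ (including $\e=0$) and sending $\e\to0$ in the smooth middle term; since the outer bounds are controlled by the tail of $u_0$ independently of $\e$, taking $n\to\I$ afterwards closes the argument.

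The main obstacle is the $L^p$ Bona-Smith estimate underlying \eqref{UB3}, and in particular securing the transport and pressure bounds uniformly in $\e$ at the endpoints $p\in\{1,\I\}$, where the projection $\mathbb{P}$, built from Riesz transforms, fails to be bounded on $B^s_{p,r}$. I would circumvent this by never estimating $\mathbb{P}$ in isolation: the pressure enters only through $\nabla P=\nabla(-\Delta)^{-1}\operatorname{div}\operatorname{div}(u\otimes u)$, whose frequency-localized pieces can be bounded directly, keeping the divergence structure of $u\otimes u$ intact so that the full operator acts boundedly on the relevant blocks even for $p=1,\I$. The dissipative sign exploited in \eqref{aux1} is what makes all of these constants independent of $\e$, which is the feature that ultimately upgrades the classical Euler theory to a statement uniform across $\e\in[0,1]$.
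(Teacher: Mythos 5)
Your proposal is correct and follows essentially the same route as the paper: uniform-in-$\e$ transport/transport-diffusion estimates (exploiting the good sign of $-\e\Delta$) for the a priori bounds, a Bona--Smith argument in the $L^p$ Besov setting --- a Lipschitz difference estimate one derivative down in $B^{s-1}_{p,r}$, the key lemma controlling $S_T^\e(S_n\phi)-S_T^\e(\phi)$ in $B^s_{p,r}$ by the tail $\norm{(1-S_n)\phi}_{B^s_{p,r}}$, and interpolation for the frequency-truncated middle term --- for the uniform continuous dependence, and the same three-term $S_n$-decomposition with an $O(\e)$ bound on the smooth-data piece for the inviscid limit. The only differences are organizational (you invoke \eqref{UB3} for the outer terms in the limit step and phrase the middle-term estimate as $B^{\gamma-2}$/$B^{\gamma}$ interpolation, where the paper reuses its quantitative Bona--Smith lemma and explicit $2^N$ bookkeeping via $\norm{w^\e_N}_{B^{s-1}_{p,r}}\leq C\e 2^N$, $\norm{w^\e_N}_{B^s_{p,r}}\leq C\e 2^{2N}$), so the two proofs are essentially identical.
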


\begin{rem}
The novelty of the above theorem is part 2) and 3), while part 1) is classical.
For $d=2$, using the structures of the equations one can derive a global a-priori bound on $\norm{u(t)}_{B_{p,r}^s}$, and hence the above theorem holds for any $T$. For $d=3$, assuming axis-symmetry without swirl and an additional condition on the vorticity, we also have the above theorem for any $T$.
\end{rem}

\begin{rem}
In the case $s>\frac{d}{p}+1, p\in [1,\infty]$, $r=\infty$, the above theorem also holds for $u_0\in B^s_{p,\infty}$ assuming additionally
\begin{align}\label{eq:tail}
\lim_{j\to \infty}2^{js}\norm{\Delta_j u_0}_{p}=0.
\end{align}
Without \eqref{eq:tail} we have existence and uniqueness in $L_T^\infty B_{p,\infty}^s$ (see Theorem 7.1 in \cite{BCD}), however, no continuous dependence. The main reason is that $B_{p,\infty}^s$ functions can not be approximated by functions with compact Fourier support.
\end{rem}

\begin{rem}
Recently, Bourgain and Li in \cite{BL,BL1} employed a combination of Lagrangian and Eulerian techniques to obtain strong local ill-posedness results of the Euler equations in borderline Besov spaces $B^{\frac dp+1}_{p,r}$ for $1\leq p<\infty$ and $1<r\leq\infty$ when $d=2,3$.   Theorem 1.1 implies a new result that the Cauchy problem of the Euler equations is locally well-posed in the borderline Besov space $B^{\frac dp+1}_{p,1}(\mathbb{R}^d)$ for $1\leq p\leq \infty$, in particular $B^{1}_{\infty,1}(\mathbb{R}^d)$, $d\geq 2$, in the sense of Hadmard, which is an open problem left in recent works by Bourgain and Li in \cite{BL,BL1} and by Misio{\l}ek and Yoneda in \cite{MY,MY2, MY3}. 
In particular, the continuous dependence seems to be new and the arguments work for many other equations in fluid dynamics. This is a bit surprising since near $B^1_{\infty,1}$ there is some weak norm inflation phenomena, see \cite{MY,MY2,MY3}.

\end{rem}
The proof of the theorem is an application of the Bona-Smith method \cite{BS} but in the $L^p$ setting. The method is very useful in proving the continuity of the solution map especially when the solution map is not  Lipshitz or $C^k$ smooth. In our problem, the solution map of the Euler equation was known not to be locally Lipshitz (at least in the torus case, see \cite{HM}), hence one can not have
\begin{align}
\norm{S_T^0(\phi)-S_T^0(\psi)}_{L_T^\infty B^s_{p,r}}\leq C\norm{\phi-\psi}_{B^s_{p,r}}.
\end{align}
The essence of the Bona-Smith method is, however, to show for any $\phi\in B^s_{p,r}$
\begin{align}
\norm{S_T^0(S_N\phi)-S_T^0(\phi)}_{L_T^\infty B^s_{p,r}}\leq C\norm{S_N\phi-\phi}_{B^s_{p,r}},
\end{align}
where $S_N$ the frequency localization operator defined in Section 2.  With these estimates we can conclude the continuous dependence.

\section{Preliminaries}

In this section we collect some preliminary definitions and lemmas. For more details we refer the readers to \cite{BCD}.

Let $\chi: {\mathbb R}^d\to [0, 1]$ be a radial, non-negative,
smooth and radially decreasing function which is supported in $\mathcal{B}\triangleq \{\xi:|\xi|\leq \frac43\}$ and
$\chi\equiv 1$ for $|\xi|\leq \frac54$. Let $\varphi(\xi)=\chi(\frac{\xi}{2})-\chi(\xi)$. Then $\varphi$ is supported in the ring $\mathcal{C}\triangleq \{\xi\in\mathbb{R}^d:\frac 3 4\leq|\xi|\leq \frac 8 3\}$.
For $u \in \mathcal{S}'$, $q\in {\mathbb Z}$, we define the Littlewood-Paley operators: $\dot{\Delta}_q{u}=\mathcal{F}^{-1}(\varphi(2^{-q}\cdot)\mathcal{F}u)$, ${\Delta}_q{u}=\dot{\Delta}_q{u}$ for $q\geq 0$, ${\Delta}_q{u}=0$ for $q\leq -2$ and $\Delta_{-1}u=\mathcal{F}^{-1}(\chi \mathcal{F}u)$, and $S_q{u}=\mathcal{F}^{-1}\big(\chi(2^{-q}\xi)\mathcal{F}u\big)$.
Here we use ${\mathcal{F}}(f)$ or $\widehat{f}$ to denote
the Fourier transform of $f$.

We define the standard vector-valued Besov spaces $B^s_{p,r}$ and $\dot B^s_{p,r}$ of the functions $u:{\mathbb R}^d\to {\mathbb R}^d$ with finite norms which are defined by
\begin{align*}
\|u\|_{B^s_{p,r}}&\triangleq \big|\big|(2^{js}\|\Delta_j{u}\|_{L^p})_{j\in {\mathbb Z}}\big|\big|_{\ell^r},\\
\|u\|_{\dot{B}^s_{p,r}}&\triangleq \big|\big|(2^{js}\|\dot{\Delta}_j{u}\|_{L^p})_{j\in {\mathbb Z}}\big|\big|_{\ell^r}.
\end{align*}
Next we recall nonhomogeneous Bony's decomposition from \cite{BCD}.
$$uv=T_uv+T_vu+R(u,v),$$
with
\[T_uv\triangleq \sum\limits_j S_{j-1}u\Delta_j v, \quad R(u,v)\triangleq \sum_{j}\sum\limits_{|k-j|\leq1}\Delta_j u \Delta_k v.\]
This is now a standard tool for nonlinear estimates. Now we use Bony's decomposition to prove some nonlinear estimates which will be used for the estimate of pressure term.

\begin{lem}\label{lem:P}
Assume $(s,p,r)$ satisfies \eqref{eq:spr}. Then

1) there exists a constant $C$, depending only on $d,p,r,s$, such that for all $u,f\in B^s_{p,r}$ with $\mathrm{div\,} u=0$,
\[\|u\cdot \na f\|_{B^{s-1}_{p,r}}\leq C\|u\|_{B^{s-1}_{p,r}}\|f\|_{B^s_{p,r}}.\]

2) there exists a constant $C$, depending only on $d,p,r,s$, such that for all $u,v\in B^s_{p,r}$ with $\mathrm{div\,} u=\mathrm{div\,} v=0$,
\begin{align*}
\|\na(-\De)^{-1}\mathrm{div\,}(u\cdot \na v)\|_{B^s_{p,r}}&\leq C \big(\|u\|_{C^{0,1}}\|v\|_{B^s_{p,r}}+\|v\|_{C^{0,1}}\|u\|_{B^s_{p,r}}\big);\\
\|\na(-\De)^{-1}\mathrm{div\,}(u\cdot \na v)\|_{B^{s-1}_{p,r}}&\leq C \min(\|u\|_{B^{s-1}_{p,r}}\|v\|_{B^s_{p,r}},\|v\|_{B^{s-1}_{p,r}}\|u\|_{B^s_{p,r}}),
\end{align*}
where $\norm{f}_{C^{0,1}}=\norm{f}_{L^\infty}+\norm{\na f}_{L^\infty}$.
\end{lem}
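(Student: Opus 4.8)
My plan is to prove both parts by Bony's paraproduct decomposition, isolating the terms where the derivative falls on the high-frequency factor and exploiting the constraint $\mathrm{div\,} u=0$ to redistribute derivatives whenever the critical index $r=1$ (i.e. $s=\frac dp+1$) would otherwise obstruct a product or remainder law.

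For part 1), write $u\cdot\na f=\sum_k u_k\p_k f$ and decompose each product as $u_k\p_k f=T_{u_k}\p_k f+T_{\p_k f}u_k+R(u_k,\p_k f)$. The first paraproduct is controlled by $\norm{T_{u_k}\p_k f}_{B^{s-1}_{p,r}}\lec\norm{u}_{L^\infty}\norm{f}_{B^s_{p,r}}$ and the second by $\norm{T_{\p_k f}u_k}_{B^{s-1}_{p,r}}\lec\norm{\na f}_{L^\infty}\norm{u}_{B^{s-1}_{p,r}}$; since \eqref{eq:spr} guarantees $B^{s-1}_{p,r}\hookrightarrow L^\infty$, both are bounded by $\norm{u}_{B^{s-1}_{p,r}}\norm{f}_{B^s_{p,r}}$. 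The delicate term is the remainder: the naive law $\norm{R(u_k,\p_k f)}_{B^{s-1}_{p,r}}\lec\norm{u_k}_{B^{s-1}_{p,r}}\norm{\p_k f}_{B^0_{\I,\I}}$ needs $s-1>0$, which fails at the endpoint $p=\I$, $r=1$. I would circumvent this using $\mathrm{div\,} u=0$, which gives $\sum_k R(u_k,\p_k f)=\sum_k \p_k R(u_k,f)=\mathrm{div\,} R(u,f)$; then $\norm{\mathrm{div\,} R(u,f)}_{B^{s-1}_{p,r}}\lec\norm{R(u,f)}_{B^s_{p,r}}\lec\norm{u}_{B^{s-1}_{p,r}}\norm{f}_{B^1_{\I,\I}}$, and the embedding $B^s_{p,r}\hookrightarrow B^1_{\I,\I}$ furnished by \eqref{eq:spr} closes the estimate in all admissible cases.

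For part 2), the essential point is that $\na(-\De)^{-1}\mathrm{div\,}$ applied to $u\cdot\na v$ behaves better than a generic order-zero multiplier because of the divergence-free structure. Using $\mathrm{div\,} v=0$ I would rewrite $\mathrm{div\,}(u\cdot\na v)=\sum_{i,j}\p_i u_j\,\p_j v_i$, a contraction of two gradients, so that $\na(-\De)^{-1}\mathrm{div\,}(u\cdot\na v)=\na(-\De)^{-1}(\p_iu_j\p_jv_i)$ where the surviving operator $\na(-\De)^{-1}$ is of order $-1$. On the high frequencies ($\Delta_j$, $j\ge 0$) it is a smoothing Fourier multiplier, so that contribution is bounded by $\norm{\p_iu_j\p_jv_i}_{B^{s-1}_{p,r}}$, and the tame product estimate yields $\norm{\na u}_{L^\infty}\norm{v}_{B^s_{p,r}}+\norm{\na v}_{L^\infty}\norm{u}_{B^s_{p,r}}$, i.e. the right-hand side of the first inequality. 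For the second inequality I would instead use that $\na(-\De)^{-1}\mathrm{div\,}$ is order zero, hence bounded on the high-frequency part of $B^{s-1}_{p,r}$, reducing to $\norm{u\cdot\na v}_{B^{s-1}_{p,r}}$ and invoking part 1); the symmetric bound in the $\min$ follows from the identity $\na(-\De)^{-1}\mathrm{div\,}(u\cdot\na v)=\na(-\De)^{-1}\mathrm{div\,}(v\cdot\na u)$ (both equal $\na(-\De)^{-1}(\p_iu_j\p_jv_i)$) together with part 1) applied with the roles of $u$ and $v$ exchanged.

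The main obstacle, and the only place where the endpoints $p\in\{1,\I\}$ genuinely bite, is the low-frequency block $\Delta_{-1}\na(-\De)^{-1}\mathrm{div\,}(u\cdot\na v)$: the Riesz-type symbol $\xi\otimes\xi/|\xi|^2$ is discontinuous at the origin and the associated operator is \emph{not} bounded on $L^1$ or $L^\I$. Here I would invoke the other divergence-free identity $\mathrm{div\,}(u\cdot\na v)=\p_i\p_j(u_jv_i)$ (from $\mathrm{div\,} u=0$), so that the full low-frequency multiplier becomes $\chi(\xi)\,\xi_l\xi_i\xi_j/|\xi|^2$, which is homogeneous of degree one and hence continuous, indeed vanishing, at $\xi=0$. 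Such a symbol lies in the Wiener algebra $\F L^1$, so $\Delta_{-1}\na(-\De)^{-1}\mathrm{div\,}$ is bounded on every $L^p$, $p\in[1,\I]$; the single low-frequency block then contributes $\lec\norm{u\otimes v}_{L^p}\lec\norm{u}_{L^\infty}\norm{v}_{L^p}$, and symmetrically, which is dominated by both right-hand sides. Assembling the high- and low-frequency contributions and summing the finitely weighted blocks completes the proof.
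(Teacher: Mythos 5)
Your route (Bony decomposition plus the divergence-free identities, with a separate Wiener-algebra argument for the low-frequency block of the singular operator) is exactly the ``standard argument'' the paper invokes without giving details, and most of it is sound: part 1), including your endpoint repair $\sum_k R(u_k,\p_k f)=\sum_k\p_k R(u_k,f)$, is correct, as is the reduction of the second inequality of part 2) to part 1) via the identity $\mathrm{div\,}(u\cdot\na v)=\mathrm{div\,}(v\cdot\na u)$, which is the same identity the paper points to.

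However, the first inequality of part 2) has a genuine gap at the endpoint. After reducing the high-frequency contribution to $\norm{\p_iu_j\,\p_jv_i}_{B^{s-1}_{p,r}}$ you invoke ``the tame product estimate''. That estimate, $\norm{fg}_{B^{\sigma}_{p,r}}\lec \norm{f}_{L^\infty}\norm{g}_{B^{\sigma}_{p,r}}+\norm{g}_{L^\infty}\norm{f}_{B^{\sigma}_{p,r}}$, requires $\sigma>0$: the two paraproducts are fine at any regularity, but the remainder needs strictly positive total regularity to sum its off-diagonal blocks (the constant in Corollary 2.86 of \cite{BCD} degenerates like $1/\sigma$). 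Under \eqref{eq:spr} one can have $(s,p,r)=(1,\infty,1)$, i.e. $\sigma=s-1=0$, and this is precisely the space $B^1_{\infty,1}$ that the paper singles out as its main new well-posedness case; there $R(\p_iu_j,\p_jv_i)$ cannot be estimated in $B^0_{\infty,1}$ from $\p u,\p v\in L^\infty\cap B^0_{\infty,1}$, since every available pairing has total regularity exactly $0$. The repair is the same trick you already used in part 1), but now exploiting \emph{both} divergence-free conditions: since derivatives commute with the dyadic blocks, $\sum_{i,j}R(\p_iu_j,\p_jv_i)=\p_i\p_j\sum_{i,j}R(u_j,v_i)$ (the terms containing $R(\p_ju_j,\cdot)$ and $R(\cdot,\p_iv_i)$ vanish). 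Then on high frequencies $\na(-\De)^{-1}\p_i\p_j$ is an operator of order one, so this remainder contributes to $B^s_{p,r}$ at most $C\norm{R(u_j,v_i)}_{B^{s+1}_{p,r}}\lec\norm{u}_{B^1_{\infty,\infty}}\norm{v}_{B^s_{p,r}}\lec\norm{u}_{C^{0,1}}\norm{v}_{B^s_{p,r}}$, the remainder estimate now being applicable because the total regularity is $s+1>0$. With this modification your proof closes at the endpoint. (A side remark: your claim that $\chi(\xi)\xi_l\xi_i\xi_j/|\xi|^2$ lies in $\F L^1$ is true, but not merely because the symbol is continuous and compactly supported; one needs the homogeneity-type bounds $|\p^\al m(\xi)|\lec|\xi|^{1-|\al|}$ near the origin, which via a dyadic decomposition give $|\F^{-1}m(x)|\lec(1+|x|)^{-d-1}$.)
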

\begin{proof}
This follows from a standard argument (e.g. see Lemmas 7.9-7.10, \cite{BCD} and Proposition 8, \cite{PP}) using Bony's decomposition and the fact that $u\cdot \na v=\mathrm{div\,}(v\otimes u)$ and $\mathrm{div\,}(u\cdot \na v)=\mathrm{div\,}(v\cdot \na u)$ when $\mathrm{div\,} u=\mathrm{div\,} v=0$.  We omit the details.
\end{proof}

We need an estimate for the transport-diffusion equation which is uniform with respect to the viscocity. Consider the following equation:
\begin{align}\label{eq:TDep}
\begin{cases}
\p_t f+v\cdot \nabla f-\e \Delta f=g,\\
f(0)=f_0,
\end{cases}
\end{align}
where $v:{\mathbb R}\times {\mathbb R}^d \to {\mathbb R}^d$, $f_0:{\mathbb R}^d\to {\mathbb R}^N$, and $g:{\mathbb R}\times {\mathbb R}^d\to {\mathbb R}^N$ are given.

\begin{lem}[Theorem 3.38, \cite{BCD}]\label{lem:TDe}
Let $1\leq p,r\leq \infty$. Assume that
\begin{align}
\sigma> -d \min(\frac{1}{p}, \frac{1}{p'}) \quad \mathrm{or}\quad \sigma> -1-d \min(\frac{1}{p}, \frac{1}{p'})\quad \mathrm{if} \quad \mathrm{div\,} v=0.
\end{align}
There exists a constant $C$, depending only on $d,p,r,\sigma$, such that for any smooth solution $f$ of \eqref{eq:TDep} and $t\geq 0$ we have
\begin{align}\label{ES2}
\sup_{s\in [0,t]}\norm{f(s)}_{B^{\sigma}_{p,r}}\leq Ce^{CV_{p}(v,t)}\big(\norm{f_0}_{B^\sigma_{p,r}}
+\int^t_0\norm{g(\tau)}_{B^{s}_{p,r}}d \tau\big),
\end{align}
with
\begin{align*}
V_{p}(v,t)=
\begin{cases}
\int_0^t \norm{\nabla v(s)}_{B^{\frac{d}{p}}_{p,\infty}\cap L^\infty}ds,\quad \mathrm{if} \quad \sigma<1+\frac{d}{p},\\
\int_0^t \norm{\nabla v(s)}_{B^{\sigma-1}_{p,r}}ds, \quad \quad \mathrm{if} \quad \sigma>1+\frac{d}{p}\ \mathrm{or}\ \{\sigma=1+\frac{d}{p} \mbox{ and } r=1\}.
\end{cases}
\end{align*}
If $f=v$, then for all $\sigma>0$ ($\sigma>-1$, if $\mathrm{div\,} v=0$), the estimate \eqref{ES2} holds with
\[V_{p}(t)=\int_0^t \norm{\nabla v(s)}_{L^\infty}ds.\]
\end{lem}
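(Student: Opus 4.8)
The plan is to prove \eqref{ES2} by Littlewood--Paley localization followed by an $L^p$ energy estimate on each dyadic block, exploiting the fact that the diffusion term carries a favorable sign so that the resulting bound is uniform in $\e\in[0,1]$. Applying $\Delta_q$ to \eqref{eq:TDep} and writing $f_q=\Delta_q f$, one obtains
\begin{align*}
\p_t f_q+v\cdot\na f_q-\e\De f_q=\Delta_q g+R_q,\quad R_q\triangleq v\cdot\na\Delta_q f-\Delta_q(v\cdot\na f).
\end{align*}
Here $R_q=[v\cdot\na,\Delta_q]f$ is the commutator, which is the only term that genuinely couples different frequencies and which will carry the whole nonlinear interaction.

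First I would derive the per-block energy inequality. Multiplying the localized equation by $|f_q|^{p-2}f_q$ and integrating over ${\mathbb R}^d$, the diffusion term produces $\e\int |f_q|^{p-2}f_q\cdot(-\De f_q)\,dx\ge 0$ (for $1\le p<\infty$ this is nonnegative after integration by parts, and in fact obeys a Bernstein-type lower bound $\gtrsim \e 2^{2q}\norm{f_q}_{L^p}^p$ coming from the heat flow on a dyadic annulus; the case $p=\infty$ is handled by a limiting/maximum-principle argument). Since this contribution has a good sign, I would simply discard it, which is precisely what makes the final estimate independent of $\e$. The transport term contributes $\frac1p\int v\cdot\na|f_q|^p\,dx=-\frac1p\int(\mathrm{div}\,v)|f_q|^p\,dx$, which vanishes when $\mathrm{div}\,v=0$ and is otherwise controlled by $\norm{\mathrm{div}\,v}_{L^\infty}\norm{f_q}_{L^p}^p$. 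Dividing by $\norm{f_q}_{L^p}^{p-1}$ then yields
\begin{align*}
\frac{d}{dt}\norm{f_q}_{L^p}\le C\norm{\mathrm{div}\,v}_{L^\infty}\norm{f_q}_{L^p}+\norm{\Delta_q g}_{L^p}+\norm{R_q}_{L^p}.
\end{align*}

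The decisive step, and the one I expect to be the main obstacle, is the commutator estimate for $\norm{R_q}_{L^p}$. I would decompose the product $v\cdot\na f$ with Bony's decomposition, writing $R_q$ as a sum of commutators of $\Delta_q$ against the paraproduct $T_{v}\na f$, the paraproduct $T_{\na f}v$, and the remainder $R(v,\na f)$ (the low-frequency block $q=-1$ being estimated directly, without any commutator gain). For the first paraproduct piece the gain comes from the classical commutator lemma: expressing $[\Delta_q,S_{j-1}v\cdot\na]\Delta_j f$ via a convolution against $2^{q}\check\varphi(2^q\cdot)$ together with a first-order Taylor expansion produces the missing derivative on $v$, giving a factor $\norm{\na v}$ rather than $\norm{v}$. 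Tracking which norm of $\na v$ appears is exactly where the two regimes in $V_p(v,t)$ separate: for $\sigma<1+\frac dp$ the high-frequency sums force the scaling-critical $B^{d/p}_{p,\infty}\cap L^\infty$ norm of $\na v$, whereas for $\sigma>1+\frac dp$ (or $\sigma=1+\frac dp,\,r=1$) the remainder and the second paraproduct can be closed using $\na v\in B^{\sigma-1}_{p,r}$. In all cases the outcome is an inequality of the form $2^{q\sigma}\norm{R_q}_{L^p}\le C c_q\,\mathcal{V}(t)\,\norm{f}_{B^\sigma_{p,r}}$ with $(c_q)_q$ of unit $\ell^r$ norm and $\mathcal{V}(t)$ the integrand defining $V_p(v,t)$; the lower bound on $\sigma$ in the hypothesis is what guarantees convergence of the relevant dyadic sums, and the div-free assumption buys the extra derivative allowing the improved threshold $\sigma>-1-d\min(\frac1p,\frac1{p'})$.

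Finally I would multiply the per-block inequality by $2^{q\sigma}$, take the $\ell^r$ norm in $q$, and combine the triangle inequality in $\ell^r$ with the commutator bound to reach
\begin{align*}
\frac{d}{dt}\norm{f}_{B^\sigma_{p,r}}\le C\,\mathcal{V}(t)\,\norm{f}_{B^\sigma_{p,r}}+\norm{g}_{B^\sigma_{p,r}},
\end{align*}
at which point Gronwall's inequality produces \eqref{ES2} with the stated factor $e^{CV_p(v,t)}$. For the special case $f=v$, the self-transport structure lets me estimate the commutator using only $\norm{\na v}_{L^\infty}$ for every $\sigma>0$ ($\sigma>-1$ if $\mathrm{div}\,v=0$): the dangerous high-high interactions in $R_q$ now pair $\na v$ against $v$ at comparable frequencies and can be absorbed after an integration by parts, so no critical Besov norm of $\na v$ is needed. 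Throughout, the computation is carried out for the smooth solutions covered by the statement, which legitimizes every integration by parts used above.
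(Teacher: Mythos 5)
The paper does not prove this lemma at all: it is quoted, with proof deferred, as Theorem 3.38 of \cite{BCD}, and your outline (dyadic localization, $L^p$ energy estimate per block in which the diffusion term is discarded thanks to its sign so the bound is uniform in $\e$, Bony-decomposition commutator estimates separating the two regimes of $V_p(v,t)$, then Gronwall) is precisely the argument given in that reference, including the self-transport refinement for $f=v$. Your sketch is therefore correct and follows essentially the same route as the paper's cited source, with the commutator bound (Lemma 2.100 in \cite{BCD}) asserted rather than rederived, which is the one ingredient you would need to write out in full.
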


\section{Proof of Theorem \ref{th1.1}}
In this section we prove Theorem \ref{th1.1}. Assume $(s,p,r)$ satisfies the conditions in Theorem \ref{th1.1}. For fixed $\e>0$, by classical results we known there exists $T_\e=T(\norm{u_0}_{B^s_{p,r}},\e)>0$ such that the Navier-Stokes system \eqref{eq:NS} has a unique solution $u$ in $\mathcal{C}([0,T_\e];B^s_{p,r})$.

{\bf Step 1.} We show: $\exists\ T=T(\norm{u_0}_{B^s_{p,r}})>0$ such that $T_\e\geq T$. Moreover, \eqref{UB1} and \eqref{UB2} hold.

By the relation $P=P(u):=(-\De)^{-1}\mathrm{div\,}(u\cdot\na u)$, we have the following estimates (see Lemma \ref{lem:P}):
\begin{align}\label{P1}
\|\nabla P\|_{B^s_{p,r}}\leq C(\|u\|_{L^\infty}+\|\na u\|_{L^\infty})\|\na u\|_{B^s_{p,r}}.
\end{align}
By Lemma \ref{lem:TDe}  and \eqref{P1}, we have
\begin{align*}
\norm{u(t)}_{B^s_{p,r}}&\leq C e^{CV_p(u,t)} \big(\norm{u_0}_{B^s_{p,r}}+\int^t_0\norm{\nabla P(\tau)}_{B^s_{p,r}}d \tau\big)
\pr \leq Ce^{CV_p(u,t)}\big(\norm{u_0}_{B^s_{p,r}}+\int^t_0\norm{u(\tau)}^2_{B^s_{p,r}}d \tau\big).
\end{align*}
Since $V_p(u,t)\leq \int_0^t \norm{u(\tau)}_{B^s_{p,r}}d\tau$, then by continuity arguments there exists $T=T(\norm{u_0}_{{B^s_{p,r}}})>0$ such that
\begin{align*}
\|u(t)\|_{B^s_{p,r}}\leq C, \quad t\in [0,T].
\end{align*}
Similarly,
\begin{align*}
\norm{u(t)}_{B^\gamma_{p,r}}&\leq C e^{CV_p(u,t)} \big(\norm{u_0}_{B^\gamma_{p,r}}+\int^t_0\norm{\nabla P(\tau)}_{B^\gamma_{p,r}}d \tau\big)\pr
\leq Ce^{CV_p(u,t)}\big(\norm{u_0}_{B^\gamma_{p,r}}+\int^t_0\norm{u(\tau)}_{B^s_{p,r}}\norm{u(\tau)}_{B^\gamma_{p,r}}d \tau\big)
\end{align*}
and Gronwall's inequality, we obtain
\begin{align*}
\norm{u(t)}_{B^\gamma_{p,r}}\leq Ce^{C\int_0^t \norm{u(\tau)}_{B^s_{p,r}} d\tau}\norm{u_0}_{B^\gamma_{p,r}}\leq C\norm{u_0}_{B^\gamma_{p,r}}.
\end{align*}

{\bf Step 2.} We show that the solution map of \eqref{eq:NS} is continuous in a uniform way with respect to $\e\in [0,1]$.

First we show for any $\phi,\psi\in B_R$
\begin{align}\label{eq:diff}
\norm{S_T^\e(\phi)-S_T^\e(\psi)}_{L_T^\infty B^{s-1}_{p,r}}\leq C \norm{\phi-\psi}_{B^{s-1}_{p,r}}, \quad \forall \ \e\in (0,1].
\end{align}
Indeed, denote $\Phi=S_T^\e(\phi)$ and $\Psi=S_T^\e(\psi)$.
By Step 1, we have
\begin{align}\label{eq:unibd}
\norm{\Phi}_{L_T^\infty B^s_{p,r}}+\norm{\Psi}_{L_T^\infty B^s_{p,r}}\leq C.
\end{align}
Let $W=\Phi-\Psi$. Then
\begin{align*}
\begin{cases}
\p_tW+\Psi\cdot \na W+W\cdot \na \Phi-\e\De W=-\na(P(\Phi)-P(\Psi)),\\
\mathrm{div\,} \Phi=\mathrm{div\,} \Psi=0,\\
W(0,x)=\phi-\psi.
\end{cases}
\end{align*}
Since $\na(P(\Phi)-P(\Psi))=\na (-\Delta)^{-1}\mathrm{div\,} (W\cdot \na \Phi+\Psi\cdot \na W)$, then by Lemma \ref{lem:P}
\begin{align}\label{E4}
\|\na(P(\Phi)-P(\Psi))\|_{B^{s-1}_{p,r}}\leq C\|W\|_{B^{s-1}_{p,r}}(\|\Phi\|_{B^s_{p,r}}
+\|\Psi\|_{B^s_{p,r}}).
\end{align}
Then by Lemma \ref{lem:TDe}, Lemma \ref{lem:P} and \eqref{E4} we get
\begin{align*}
\|W(t)\|_{B^{s-1}_{p,r}}&\leq Ce^{CV_p(\Psi,T)}\big(\|W(0)\|_{B^{s-1}_{p,r}}+\int^t_0 \|W\cdot\na \Phi\|_{B^{s-1}_{p,r}}d\tau\\
&\qquad \qquad\qquad\qquad\qquad+\int_0^t\|\na(P(\Phi)-P(\Psi))\|_{B^{s-1}_{p,r}}d \tau\big)\nonumber \pr
\leq Ce^{CV_p(\Psi,T)}\big(\|W(0)\|_{B^{s-1}_{p,r}}+\int^t_0\|W\|_{B^{s-1}_{p,r}}(\|\Phi\|_{B^s_{p,r}}+\|\Psi\|_{B^s_{p,r}})d \tau\big),
\end{align*}
which by Gronwall's inequality and \eqref{eq:unibd} implies
\begin{align}\label{E7-1}
\|W(t)\|_{B^{s-1}_{p,r}}\leq C\|W(0)\|_{B^{s-1}_{p,r}},
\end{align}
and thus \eqref{eq:diff} is proved.

Next we show for any $\phi\in B_R$
\begin{align}\label{eq:diffN}
\norm{S_T^\e(S_{ N} \phi)-S_T^\e(\phi)}_{L_T^\infty B^s_{p,r}}\leq C \norm{S_{ N} \phi-\phi}_{B^s_{p,r}}, \quad \forall \ \e\in [0,1].
\end{align}
Indeed, denote $v_N=S_T^\e(S_{N} \phi)$ and $v=S_T^\e(\phi)$. Then by Step 1, we have $\norm{v}_{L_T^\infty B^{s}_{p,r}}\leq C$ and
\begin{align}
\norm{v_N}_{L_T^\infty B^{s+k}_{p,r}}\leq C\norm{S_N\phi}_{B^s_{p,r}} \leq C2^{Nk}, \quad k=0,1,2.
\end{align}
Let $w_N=v_N-v$. Then we have
\begin{align*}
\begin{cases}
\p_tw_N+v\cdot \na w_N+w_N\cdot \na v_N-\e\De w_N=-\na(P(v_N)-P(v)),\\
\mathrm{div\,} v=\mathrm{div\,} v_N=0,\\
w_N(0,x)=S_{ N} \phi-\phi.
\end{cases}
\end{align*}
Since $\na(P(v_N)-P(v))=\na (-\De)^{-1}\mathrm{div\,}(v\cdot \na w_N+w_N\cdot \na v_N)$,
by Lemma \ref{lem:P} we get
\begin{align*}
\|\na(P(v_N)-P(v))\|_{B^s_{p,r}}\leq& C\|w_N\|_{B^s_{p,r}}(\|v_N\|_{B^s_{p,r}}
+\|v\|_{B^s_{p,r}})
\end{align*}
and
\begin{align*}
\|w_N\cdot\na v_N\|_{B^s_{p,r}}\leq& C\big(\|w_N\|_{L^\infty}\|\na v_N\|_{B^s_{p,r}}
+\|w_N\|_{B^s_{p,r}}\|\na v_N\|_{L^\infty}\big)\\
\leq&C\big(\|w_N\|_{B^{s-1}_{p,r}}\|v_N\|_{B^{s+1}_{p,r}}
+\|w_N\|_{B^s_{p,r}}\|v_N\|_{B^s_{p,r}}\big).
\end{align*}
Therefore, by Lemma \ref{lem:TDe} and the above estimates we get
\begin{align*}
\|w_N(t)\|_{B^s_{p,r}}\leq& Ce^{CV_p(v,T)}\big(\|w_N(0)\|_{B^s_{p,r}}+\int^t_0\|w_N\cdot\na v_N\|_{B^s_{p,r}}+\|\na(P(v_N)-P(v))\|_{B^s_{p,r}}d \tau\big)\\
\leq& Ce^{CV_p(v,T)}\big(\|w_N(0)\|_{B^s_{p,r}}+\int^t_0\|w_N\|_{B^s_{p,r}}(\|v_N\|_{B^s_{p,r}}+\|v\|_{B^s_{p,r}})d\tau\\
&\qquad\qquad\qquad\qquad+\int_0^t\|w_N\|_{B^{s-1}_{p,r}}\|v_N\|_{B^{s+1}_{p,r}} d \tau\big)\\
\leq& C\big(\|w_N(0)\|_{B^s_{p,r}}+\int^t_0C\|w_N\|_{B^s_{p,r}}+C2^N\|w_N(0)\|_{B^{s-1}_{p,r}} d\tau\big)\\
\leq& C\big(\|w_N(0)\|_{B^s_{p,r}}+C\int^t_0\|w_N\|_{B^s_{p,r}} d\tau\big).
\end{align*}
Using Gronwall's inequality we prove \eqref{eq:diffN}.

Now we prove the continuous dependence in $B^s_{p,r}$. For any $\phi,\psi\in B_R$ we have
\begin{align*}
&\norm{S_T^\e(\phi)-S_T^\e(\psi)}_{L_T^\infty B^s_{p,r}}\\
\leq &\norm{S_T^\e(\phi)-S_T^\e(S_N\phi)}_{L_T^\infty B^s_{p,r}}+\norm{S_T^\e(\psi)-S_T^\e(S_N\psi)}_{L_T^\infty B^s_{p,r}}\\
&\quad+\norm{S_T^\e(S_N\phi)-S_T^\e(S_N\psi)}_{L_T^\infty B^s_{p,r}}\\
\leq&C(\norm{\phi-S_N\phi}_{B^s_{p,r}}+\norm{\phi-\psi}_{B^s_{p,r}})\\
&\quad+C\norm{S_T^\e(S_N\phi)-S_T^\e(S_N\psi)}_{L_T^\infty B^{s-1}_{p,r}}^{1/2}\norm{S_T^\e(S_N\phi)-S_T^\e(S_N\psi)}_{L_T^\infty B^{s+1}_{p,r}}^{1/2}\\
\leq&C(\norm{\phi-S_N\phi}_{B^s_{p,r}}+\norm{\phi-\psi}_{B^s_{p,r}})+C2^{N/2}\norm{\phi-\psi}_{B^s_{p,r}}^{1/2}.
\end{align*}
With the above estimate we obtain the continuous dependence.

{\bf Step 3.} We finally prove the inviscid limit of the Navier-Stokes system \eqref{eq:NS}.

To obtain the result \eqref{UB4}, we decompose the left term of \eqref{UB4} into
\begin{align}\label{EE}
\norm{S_T^\e(u_0)-S_T^0(u_0)}_{L_t^\infty B^s_{p,r}}&\leq \norm{S_T^\e(S_{N}u_0)-S_T^0(S_Nu_0)}_{L_t^\infty B^s_{p,r}}\prq
+\norm{S_T^\e(S_{N}u_0)-S_T^\e(u_0)}_{L_t^\infty B^s_{p,r}}\nonumber\prq
+\norm{S_T^0(S_{N}u_0)-S_T^0(u_0)}_{L_t^\infty B^s_{p,r}}\nonumber.
\end{align}
We set $u^\e_N=S_T^\e(S_Nu_0)$, $u_N=S_T^0(S_Nu_0)$ and $w^\e_N=u^\e_N-u_N$ and have
\begin{align}
\begin{cases}
\p_tw^\e_N+u_N\cdot\na w^\e_N+w^\e_N\cdot\na u^\e_N=-\na(P(u^\e_N)-P(u_N))+\e\Delta u^\e_N,\\
\mathrm{div\,} u^\e_N=\mathrm{div\,} u_N=0,\\
w^\e_N(0,x)=0.
\end{cases}
\end{align}
Similarly as Step 2, we have
\begin{align*}
\|w^\e_N(t)\|_{B^{s-1}_{p,r}}
\leq& C\int^t_0\|w^\e_N(\tau)\|_{B^{s-1}_{p,r}}\big(\|u^\e_N(\tau)\|_{B^s_{p,r}}+\|u_N(\tau)\|_{B^s_{p,r}}\big)d \tau+C\e2^{N},
\end{align*}
which implies
\begin{align}\label{E14}
\|w^\e_N(t)\|_{B^{s-1}_{p,r}}\leq C\e2^{N}.
\end{align}
Moreover,
\begin{align}\label{E12}
\|w^\e_N(t)\|_{B^s_{p,r}}
\leq& Ce^{V_p(u_N,T)}\Big(\int^t_0\big(\|w^\e_N(\tau)\|_{B^s_{p,r}}\|u^\e_N(\tau)\|_{B^s_{p,r}}+\|w^\e_N(\tau)\|_{B^s_{p,r}}\|u_N(\tau)\|_{B^s_{p,r}}\nonumber \prq +\|w^\e_N(\tau)\|_{B^{s-1}_{p,r}}\|u^\e_N(\tau)\|_{B^{s+1}_{p,r}}\big)d \tau+ \e\int^t_0\|u^\e_N(\tau)\|_{B^{s+2}_{p,r}}d \tau \Big)\nonumber\\
\leq& C\int^t_0\|w^\e_N(\tau)\|_{B^s_{p,r}}\big(\|u^\e_N(\tau)\|_{B^s_{p,r}}+\|u_N(\tau)\|_{B^s_{p,r}}\big)d \tau
+C\e2^{2N},
\end{align}
which along with Gronwall's inequality leads to
\begin{align}\label{E15}
\|w^\e(t)\|_{B^s_{p,r}}&\leq C\e2^{2N}.
\end{align}
Therefore, combining \eqref{eq:diffN}, \eqref{EE} and \eqref{E15}, we have
\begin{align*}
\norm{S_T^\e(u_0)-S_T^0(u_0)}_{L_t^\infty B^s_{p,r}}&\leq  C(\norm{u_0-S_Nu_0}_{B^s_{p,r}}+\e2^{2N}).
\end{align*}
This completes the proof of \eqref{UB4}.

\subsection*{Acknowledgements}
Z. Y. was partially supported by NNSFC (No. 11671407),  FDCT (No. 098/2013/A3), Guangdong Special Support Program (No. 8-2015), and the key project of NSF of  Guangdong province (No. 2016A030311004).


\end{document}